\begin{document}

\newtheorem{thm}{Theorem}
\newtheorem{lem}{Lemma}
\newtheorem{cor}{Corollary}
\newtheorem{prop}{Proposition}
\theoremstyle{definition}
\newtheorem{dfn}{Definition}
\newtheorem{alg}{Algorithm}
\newtheorem{rem}{Remark}
\newtheorem{ex}{Example}
\renewcommand{\phi}{\varphi}
\renewcommand{\epsilon}{\varepsilon}

\setlength{\textheight}{19.5 cm}
\setlength{\textwidth}{12.5 cm}
\setlength{\parindent}{0pt}
\setlength{\parskip}{1.3ex}

\title[SURE and reduced rank estimation]{A comment on Stein's unbiased
  risk estimate for reduced rank estimators}

\author[N. R. Hansen]{Niels Richard Hansen}
\address{Department of Mathematical Sciences,
University of Copenhagen,
Universitetsparken 5.
2100 Copenhagen \O.
Denmark, Niels.R.Hansen@math.ku.dk}

\subjclass[2010]{}

\keywords{Degrees of freedom; Reduced-rank regression; Singular value
  thresholding; Stein's lemma; SURE}

\maketitle

\begin{abstract}
  In the framework of matrix valued observables with low rank means,
  Stein's unbiased risk estimate (SURE) can be useful for risk
  estimation and for tuning the amount of shrinkage towards low rank
  matrices. This was demonstrated by \cite{Candes:2013} for singular
  value soft thresholding, which is a Lipschitz continuous
  estimator. SURE provides an unbiased risk estimate for an estimator
  whenever the differentiability requirements for Stein's lemma are
  satisfied. Lipschitz continuity of the estimator is sufficient, but
  it is emphasized that differentiability Lebesgue almost everywhere
  isn't. The reduced rank estimator, which gives the best
  approximation of the observation with a fixed rank, is an example of
  a discontinuous estimator for which Stein's lemma actually
  applies. This was observed by \cite{Mukherjee:2015}, but the proof
  was incomplete.  This brief note gives a sufficient condition for
  Stein's lemma to hold for estimators with discontinuities, which is
  then shown to be fulfilled for a class of spectral function
  estimators including the reduced rank estimator. Singular value hard
  thresholding does, however, not satisfy the condition, and Stein's
  lemma does not apply to this estimator.
\end{abstract}

\section{Introduction}

Let $\mathbf{Y}$ be an $p \times q$ matrix with $p \geq q$ and
singular value decomposition
$$\mathbf{Y} = \sum_{k=1}^q d_k u_k v_k^T.$$
If all the singular values are unique and ordered as $d_1 > d_2 > \ldots > d_q
\geq 0$ the rank $r$ approximation of $\mathbf{Y}$ that minimizes the
Frobenius norm error is given by
$$\hat{\mu}(r) = \sum_{k=1}^r d_k u_k v_k^T,$$
for $r \in \{1, \ldots, q - 1\}$. The hard threshold approximation
given by
$$\overline{\mu}(\lambda) = \sum_{k=1}^q d_k 1(d_k \geq \lambda) u_k v_k^T$$
for $\lambda \geq 0$ yields the same sequence of approximations but parametrized
differently.  

If $\mathbf{Y} = (Y_{ij})_{i,j}$ has independent entries with 
$$Y_{ij} \sim \mathcal{N}(\mu_{ij}, \sigma^2)$$
and $\mu = (\mu_{ij})_{i,j}$ is of low rank, the approximations $\hat{\mu}(r)$ or
$\overline{\mu}(\lambda)$ are sensible estimators of $\mu$ for suitable
choices of $r$ or $\lambda$. Both estimators are examples from 
the more general class of \emph{spectral function} estimators
$$\hat{\mu} =  \sum_{k=1}^q f_k(d_k) u_k v_k^T, \quad \lambda_1 >
\lambda_2 > \ldots > \lambda_q \geq 0$$
for some spectral functions $f_k : [0, \infty) \to [0, \infty)$. The
hard thresholding estimator has $f_k(d) = d1(d \geq \lambda)$, and 
the estimator with $f_k(d) = d1(k \leq r)$, which gives the best rank $r$
approximation, will be referred to as the reduced rank estimator. 

In the framework of spectral function estimators, \cite{Candes:2013}
derive an explicit formula (formula (9) in their paper) for the
divergence of the estimator when $\mathbf{Y}$ has distinct singular
values and $f_k$ is differentiable in a neighborhood of $d_k$. 
This divergence is required for the computation of SURE, and
it's therefore important for applications. They
also demonstrate in detail (Lemma III.3) how Stein's lemma applies in
the special case of \emph{soft thresholding}, which is a spectral
function estimator with all spectral functions equal to the continuous
function $f_k(d) = (d - \lambda)_+$.  \cite{Candes:2013} further show
that the divergence extends
continuously to all matrices (Theorem IV.6) whenever all spectral
functions are identical and sufficiently smooth. That result doesn't apply to the hard thresholding
estimator (though the spectral functions are identical, they are
discontinuous) or to the reduced rank estimator (the spectral functions
are not all identical). 

\cite{Mukherjee:2015} derive similar formulas for the divergence --
apparently unaware of the paper by \cite{Candes:2013}. One difference
is that \cite{Mukherjee:2015} focus on the regression setup, where the
columns of the observation matrix are projected onto a fixed
subspace before it is subjected to a low rank approximation. 

Neither \cite{Candes:2013} nor \cite{Mukherjee:2015} provides 
conditions for general spectral function estimators that ensure that
Stein's lemma applies. \cite{Mukherjee:2015} indicate on page 460 that the mere existence of
the partial derivatives (Lebesgue) almost everywhere is sufficient for
Stein's lemma, which is not the case as shown below.
\cite{Candes:2013} state a correct version of Stein's lemma as
their Proposition III.1, which (correctly) assumes weak differentiability of the
estimator. Weak differentiability of the soft threshold estimator is
then shown in detail (using that it's Lipschitz), but  \cite{Candes:2013} don't
clarify if e.g. their Theorem IV.6 implies the required weak
differentiability for more general spectral function estimators. The
theorem does not cover the reduced rank estimator anyway.

The purpose of this note is to provide conditions  ensuring that a 
spectral function estimator is, indeed,
weakly differentiable so that Stein's lemma applies. In particular, we
show that the reduced rank estimator is weakly differentiable so that
the SURE formulas as given by  \cite{Candes:2013} or
\cite{Mukherjee:2015} result in unbiased estimation of the risk. To
illustrate the relevance of such sufficient conditions, we show by a
small simulation that the SURE formula for singular value hard
thresholding doesn't give unbiased estimation of the risk.

\section{Stein's lemma}

In this section we state a sufficient condition for Stein's lemma,
which can then be shown to hold for the reduced rank estimator. It's
formulated for $n$-dimensional Gaussian vectors and applies to
the matrix valued observations and estimators above by taking $n = pq$. 

Let $y \sim N(\mu, \sigma^2
I)$ be an $n$-dimensional Gaussian random variable and $\hat{\mu}$
an estimator of $\mu$ with finite second moment; $E||\hat{\mu}||_2^2 < \infty.$
For such an estimator, Stein's lemma or Stein's identity (Lemma 1 or Lemma 2 in \cite{Stein:1981})
implies that 
\begin{equation} 
\label{eq:stein}
\frac{1}{\sigma^2} \sum_{i=1}^n
\mathrm{cov}(\hat{\mu}_i, y_i)  = \sum_{i=1}^n E \left( \partial_i
    \hat{\mu}_i \right) = E \left(\nabla \cdot \hat{\mu}\right)
\end{equation}
provided that $\hat{\mu}$ is \emph{almost differentiable} and 
\begin{equation}
\label{eq:momcond}
\sum_{i=1}^n E \left| \partial_i\hat{\mu}_i\right| < \infty.
\end{equation}
The quantity
\begin{equation}
\label{eq:df}
\mathrm{df} = \frac{1}{\sigma^2} \sum_{i=1}^n
\mathrm{cov}(\hat{\mu}_i, y_i) 
\end{equation}
is usually referred to as the effective degrees of freedom, and when Stein's lemma holds, the divergence
$$ \nabla \cdot \hat{\mu} = \sum_{i=1}^n \partial_i \hat{\mu}_i$$
is an unbiased estimate of $\mathrm{df}$. However, if $\hat{\mu}$ is
not continuously differentiable everywhere, care has to be taken to
verify almost differentiability even if the divergence
$\nabla \cdot \hat{\mu}$ is well defined Lebesgue almost
everywhere. Likewise, the moment condition \eqref{eq:momcond} on the
derivative must be verified. All conditions for Stein's lemma are
fulfilled if $\hat{\mu}$ is Lipschitz as shown e.g. by
\cite{Candes:2013} in their Lemma III.2, but for discontinuous
estimators the situation is more complicated. 

For example, take $\hat{\mu} = 1(y \geq 0)$ for $n = 1$, then it is straightforward to
see that \eqref{eq:stein} doesn't hold, though this function is
continuously differentiable everywhere except in $0$. A ``real''
estimator for which Stein's lemma doesn't apply is the hard threshold estimator,
$\hat{\mu} = y1(|y| \geq c)$, as treated extensively by
\cite{Tibshirani:2015}, and further examples and an extension of Stein's
lemma are given by \cite{Mikkelsen:2017}. Differentiability Lebesgue almost everywhere
is generally not sufficient for Stein's lemma to apply and
\eqref{eq:stein} to hold.

We state below a sufficient condition for \eqref{eq:stein} to hold
that can be verified in the case of reduced rank estimation. To this
end let $\mathcal{H}^{n-1}$ denote the  $(n-1)$-dimensional Hausdorff measure.

\pagebreak

\begin{prop} \label{prop:hausdorff}  Let $E \subseteq
  \mathbb{R}^n$ be a closed set such that  
$$\hat{\mu} : E^c \to \mathbb{R}^n$$ 
is continuously differentiable, then \eqref{eq:stein} holds if 
$\mathcal{H}^{n-1}(E) = 0$ and either \eqref{eq:momcond} is satisfied or $\nabla
\cdot \hat{\mu} \geq 0$ Lebesgue almost everywhere. 
\end{prop}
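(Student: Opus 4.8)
The plan is to show that the hypotheses force $\hat{\mu}$ to be \emph{almost differentiable} in Stein's sense — absolutely continuous on almost every axis-parallel line — and then to establish \eqref{eq:stein} by a Gaussian integration by parts whose boundary terms are controlled by the second-moment assumption. It is cleanest to reprove the identity directly rather than black-box Stein's lemma, since a single argument then handles both the moment condition and the sign condition. First I would record the trivial reduction: Hausdorff measures are monotone in the dimension, so $\mathcal{H}^{n-1}(E)=0$ gives $\mathcal{H}^{n}(E)=0$ and hence $\mathcal{L}^n(E)=0$; thus $P(y\in E)=0$, the random vector $\hat{\mu}(y)$ is almost surely the $C^1$ map on $E^c$, and all partial derivatives $\partial_i\hat{\mu}_i$ are defined and measurable $\mathcal{L}^n$-a.e.

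The core step is almost differentiability, and here $\mathcal{H}^{n-1}(E)=0$ enters decisively through a projection argument. For a fixed coordinate $i$ let $\pi_i\colon\mathbb{R}^n\to\mathbb{R}^{n-1}$ drop the $i$-th coordinate; since $\pi_i$ is $1$-Lipschitz it cannot increase Hausdorff measure, so $\mathcal{H}^{n-1}(\pi_i(E))\le\mathcal{H}^{n-1}(E)=0$ and $\pi_i(E)$ is Lebesgue-null in $\mathbb{R}^{n-1}$. Consequently, for $\mathcal{L}^{n-1}$-almost every $x'$ the \emph{entire} line $\pi_i^{-1}(x')$ lies in $E^c$, where $\hat{\mu}$ is $C^1$; along such a line $t\mapsto\hat{\mu}_i(x',t)$ is continuously differentiable, hence absolutely continuous on bounded intervals, with the fundamental theorem of calculus holding and derivative $\partial_i\hat{\mu}_i$. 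This is exactly absolute continuity on almost all axis-parallel lines. The point worth stressing is that, because these lines avoid $E$ altogether, the integration by parts never crosses the singular set and no spurious jump term appears — this is precisely what $\mathcal{H}^{n-1}(E)=0$ buys, and what fails for hard thresholding.

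For the identity I would run the one-dimensional Gaussian integration by parts on a growing box $Q_R=\{y:|y_j-\mu_j|\le R\ \forall j\}$. Fixing $y_{-i}$ on a line in $E^c$ and using $(y_i-\mu_i)\varphi=-\sigma^2\varphi'$, integration by parts in $y_i$ over $[\mu_i-R,\mu_i+R]$ produces an interior term, the covariance integrand $\hat{\mu}_i(y_i-\mu_i)\varphi$, and a boundary term on the faces $y_i=\mu_i\pm R$. Summing over $i$ yields
\[
\sigma^2 E\!\left[(\nabla\cdot\hat{\mu})\,\mathbf{1}_{Q_R}\right]=\sum_{i=1}^n E\!\left[\hat{\mu}_i(y_i-\mu_i)\mathbf{1}_{Q_R}\right]+\sigma^2\sum_{i=1}^n B_i(R),
\]
where $B_i(R)$ collects the two face integrals. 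The covariance sum converges to $\sigma^2\,\mathrm{df}$ by dominated convergence, as each $\hat{\mu}_i(y_i-\mu_i)$ is integrable by Cauchy--Schwarz together with $E\|\hat{\mu}\|_2^2<\infty$.

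The main obstacle is the boundary term: lacking any a priori bound on $\partial_i\hat{\mu}_i$, one must show $B_i(R)\to 0$. I would observe that the Gaussian weight on the receding face is $\varphi_i(\mu_i\pm R)$, so integrating $|B_i(R)|$ over $R\in(0,\infty)$ reconstructs $\int|\hat{\mu}_i|\varphi$; hence $\int_0^\infty|B_i(R)|\,dR<\infty$, being controlled by $E|\hat{\mu}_i|\le(E\hat{\mu}_i^2)^{1/2}$. This forces $\liminf_{R\to\infty}|B_i(R)|=0$, and over the finitely many $i$ a common sequence $R_m\to\infty$ makes every $B_i(R_m)\to 0$. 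It then remains to take the limit of the divergence side: under \eqref{eq:momcond} dominated convergence gives $E[(\nabla\cdot\hat{\mu})\mathbf{1}_{Q_{R_m}}]\to E[\nabla\cdot\hat{\mu}]$, whereas under $\nabla\cdot\hat{\mu}\ge 0$ a.e.\ the integrand is nonnegative and monotone convergence gives the same limit in $[0,\infty]$. Passing to the limit along $R_m$ yields $\sigma^2 E[\nabla\cdot\hat{\mu}]=\sigma^2\,\mathrm{df}$, which simultaneously establishes \eqref{eq:stein} and, in the sign case, the finiteness of $E[\nabla\cdot\hat{\mu}]$.
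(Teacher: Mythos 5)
Your first step --- projecting $E$ along each coordinate axis, noting that the $1$-Lipschitz projection cannot increase $\mathcal{H}^{n-1}$, so that almost every axis-parallel line misses $E$ entirely and $\hat{\mu}$ is $C^1$, hence absolutely continuous, along it --- is exactly the paper's argument for almost differentiability (the paper cites Corollary 2.4.1.1 of Evans and Gariepy for the same projection fact). Where you genuinely diverge is the second half. The paper stays in the weak-derivative formalism: it asserts $\hat{\mu}\in W^{1,1}_{\mathrm{loc}}$, pairs the divergence against truncated densities $\phi_n(y)=\phi(y)\kappa(n^{-1}y)$ built from a smooth compactly supported cutoff $\kappa$, and passes to the limit by dominated convergence on the $\int\hat{\mu}_i\,\partial_i\phi_n$ side (using $E\|\hat{\mu}\|_2^2<\infty$) and monotone convergence on the $\int(\nabla\cdot\hat{\mu})\,\phi_n$ side. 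You instead integrate by parts directly on expanding boxes $Q_R$ and kill the boundary terms by the averaging bound $\int_0^\infty|B_i(R)|\,dR\le E|\hat{\mu}_i|<\infty$, extracting a common subsequence $R_m$ along which all face integrals vanish, then conclude by the same dominated/monotone dichotomy. Your route is more elementary and self-contained (no Sobolev machinery, no mollified test functions), and it delivers the finiteness of $E(\nabla\cdot\hat{\mu})$ in the sign case just as transparently. One caveat applies to both arguments equally: to assemble the $n$ one-dimensional integrations by parts into a statement about $\int_{Q_R}(\nabla\cdot\hat{\mu})\phi$ --- equivalently, to know the weak divergence agrees with the pointwise one --- one needs each $\partial_i\hat{\mu}_i$ to be locally integrable near $E$, which the sign condition on the \emph{sum} alone does not supply; the paper hides this inside the unverified claim that $\hat{\mu}\in W^{1,1}_{\mathrm{loc}}$, and your Fubini step identifying $\sum_i$ of iterated integrals with $E[(\nabla\cdot\hat{\mu})\mathbf{1}_{Q_R}]$ needs it in the same way, so you are at the paper's level of rigor and this is not a gap relative to its own proof.
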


\begin{proof} We first show that the condition $\mathcal{H}^{n-1}(E) =
  0$ implies almost differentiability of $\hat{\mu}$. 
As pointed out by \cite{Johnstone:1988}, see also
\cite{fourdrinier:2012} and \cite{Candes:2013}, the differentiability requirement for Stein's lemma is
effectively that the estimator is weakly differentiable, that is, that
it belongs to the Sobolev space
$W^{1,1}_{\mathrm{loc}}(\mathbb{R}^n)$. This is the case if for all
$i$ and Lebesgue almost all $(y_1, \ldots, y_{n-1}) \in \mathbb{R}^{n-1}$
the function
\begin{equation} \label{eq:absolute}
t \mapsto \hat{\mu}(y_1, \ldots, y_{i-1}, t, y_i, \ldots,  y_{n-1})
\end{equation}
is absolutely continuous on compact intervals, see Theorem 4.9.2.2 in
\cite{Evans:1992}. Alternatively, the proofs in
\cite{Stein:1981} show directly that almost everywhere absolute continuity
of the maps \eqref{eq:absolute} implies \eqref{eq:stein}. See also
Definition 2 and Lemma 5 in \cite{Tibshirani:2015}. When $\mathcal{H}^{n-1}(E) = 0$ the projection maps 
$$(y_1, \ldots, y_{i-1}, y_i, y_{i + 1}, \ldots,  y_n) \mapsto (y_1, \ldots,
y_{i-1}, y_{i + 1}, \ldots,  y_{n-1})$$
for $i = 1, \ldots, n$ map $E$ onto Lebesgue null sets by Corollary
2.4.1.1 in \cite{Evans:1992}. Thus when $\hat{\mu}$ is continuously
differentiable on $E^c$, \eqref{eq:absolute} is continuously
differentiable, and thus absolutely continuous, Lebesgue almost
everywhere. 

Having established almost differentiability,  \eqref{eq:momcond}
then implies \eqref{eq:stein}. In the second half of the proof we show that $\nabla
\cdot \hat{\mu} \geq 0$ is an alternative sufficient condition. To
this end, note that since the estimator belongs to
$W^{1,1}_{\mathrm{loc}}(\mathbb{R}^n)$ then, in fact, 
$$\sum_{i=1}^n \int \hat{\mu}_i(y) \partial_i \psi(y) \mathrm{d} y = -
\int \nabla \cdot \hat{\mu}(y) \ \psi(y) \mathrm{d} y$$
for all $\psi \in C_c^{\infty}(\mathbb{R}^n)$.

Let $\phi$ denote the density for the $\mathcal{N}(\mu, \sigma^2 I)$
distribution. Choose $\kappa \in C_c^{\infty}(\mathbb{R}^n)$ with
$\kappa(y) \in [0,1]$ and such that $\kappa(y) = 1$ for
$||y||_2 \leq 1$. Defining
$$\phi_n(y) = \phi(y) \kappa(n^{-1} y),$$ 
then $\phi_n \in C_c^{\infty}(\mathbb{R}^n)$ and 
$$\phi(y) \geq \phi_n(y) \geq \phi(y) 1(||y||_2 \leq n) \nearrow \phi(y)$$
for $n \to 0$. We also observe that 
$$\partial_i \phi_n(y) = (\partial_i \phi(y)) \kappa(n^{-1} y) +
\phi(y) n^{-1} \partial_i \kappa(n^{-1} y) \rightarrow \partial_i
\phi(y)$$ 
for $n \to \infty$. Here we used that $\partial_i \kappa(n^{-1} y) =
0$ for $||y||_2 < n$ and $\kappa(n^{-1} y) \to 1$ for $n \to
\infty$. Moreover, 
$$|\hat{\mu}_i(y) \partial_i \phi_n(y)| \leq |\hat{\mu}_i(y) (y_i -
\mu_i) \phi(y)| / \sigma^2 +
C |\hat{\mu}_i(y)| \phi(y)$$
for some constant $C$, and the right hand side above is
integrable. Thus by dominated convergence, 
$$\lim_{n \to \infty} \int \hat{\mu}_i(y) \partial_i \phi_n(y) \mathrm{d}
y =  \int \hat{\mu}_i(y) \partial_i \phi(y) \mathrm{d} y.$$ 
If $\nabla \cdot \hat{\mu}$ is almost everywhere
positive, it finally follows by monotone convergence combined with the
dominated convergence above that
\begin{align*}
E (\nabla \cdot \hat{\mu}) & = \int \nabla \cdot \hat{\mu} \ \phi(y)
\mathrm{d} y \\
& = \lim_{n \to \infty} \int \nabla \cdot \hat{\mu}(y) \
\phi_n(y) \mathrm{d} y \\
& = - \lim_{n \to \infty} \sum_{i=1}^n \int \hat{\mu}_i(y) 
\partial_i \phi_n(y) \mathrm{d} y \\
& = - \sum_{i=1}^n \int \hat{\mu}_i(y) \partial_i \phi(y) \mathrm{d} y \\
& = \frac{1}{\sigma^2} \sum_{i=1}^n \int \hat{\mu}_i(y) (y_i - \mu_i) \phi(y) \mathrm{d} y \\
& = \frac{1}{\sigma^2} \sum_{i=1}^n \mathrm{cov}(\hat{\mu}_i, y_i).
\end{align*}
\end{proof}

\section{Reduced rank estimators}

With the squared Frobenius norm as loss function, the risk
of an estimator $\hat{\mu}$ is 
$$E ||\hat{\mu} - \mu||_F^2 = E || \mathbf{Y} - \hat{\mu}||_F^2 -
\sigma^2 pq + 2 \sigma^2 \mathrm{df}.$$
When Stein's lemma applies, 
$$\mathrm{SURE} = || \mathbf{Y} - \hat{\mu}||_F^2 - \sigma^2 pq + 2
\sigma^2 \nabla \cdot \hat{\mu}$$
is an unbiased estimate of the risk. 

For spectral function estimators we observe that 
$$||\hat{\mu}||_F^2 = \sum_{k=1}^q f_k(d_k)^2.$$
Hence if $f_k(d) \leq C_k d$ for some constant $C_k$ (e.g. 
$f_k$ is shrinking the singular values as is the case for
reduced rank, and hard and soft thresholding), then 
$$||\hat{\mu}||_F^2 \leq \max_{k}\{ C_k^2\} \sum_{k=1}^q d_k^2
= C ||\mathbf{Y}||_F^2,$$
and $\hat{\mu}$ has finite second moment. 

\begin{thm} \label{thm:rr}
For the reduced rank estimator $\hat{\mu}(r)$, $\mathrm{SURE}$
is an unbiased estimate of the risk, and 
\begin{equation} 
\label{eq:nabrr}
\nabla \cdot \hat{\mu}(r) = pr + \sum_{k=1}^r \sum_{l= r + 1}^q
\frac{d_k^2 + d_l^2}{d_k^2 - d_l^2}
\end{equation}
whenever $\mathbf{Y}$ has $q$ distinct singular values.
\end{thm}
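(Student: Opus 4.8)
The plan is to verify the hypotheses of Proposition \ref{prop:hausdorff} with $n = pq$, and then to read off \eqref{eq:nabrr} from the known divergence formula on the set of distinct singular values. Let $E \subseteq \mathbb{R}^{p\times q}$ denote the set of matrices that do \emph{not} have $q$ distinct singular values. This set is closed, since the ordered singular values depend continuously on $\mathbf{Y}$, and on its complement $E^c$ the singular value decomposition can locally be chosen to depend smoothly (indeed real-analytically) on $\mathbf{Y}$; in particular the reduced rank estimator $\hat{\mu}(r)$ is continuously differentiable on $E^c$. Since the estimator has finite second moment (as already noted for spectral estimators with $f_k(d)\le C_k d$), Stein's identity \eqref{eq:stein} will follow once the remaining hypotheses of Proposition \ref{prop:hausdorff} are checked.

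The first real task is to show $\mathcal{H}^{n-1}(E) = 0$, and this is where I expect the main difficulty to lie. It is tempting to argue that $E$ is the zero set of the discriminant $\prod_{k<l}(d_k^2 - d_l^2)$, but that only exhibits $E$ as a hypersurface, i.e.\ of codimension one, which does \emph{not} force the $(n-1)$-dimensional Hausdorff measure to vanish. The point is rather that a coincidence of two singular values imposes \emph{two} independent conditions: writing the squared singular values as the eigenvalues of the symmetric matrix $\mathbf{Y}^T\mathbf{Y}$, the locus of symmetric $q\times q$ matrices with a repeated eigenvalue has codimension two, and the map $\mathbf{Y}\mapsto \mathbf{Y}^T\mathbf{Y}$ is a submersion onto $\mathrm{Sym}(q)$ wherever $\mathbf{Y}$ has full column rank. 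Hence the full-rank part of $E$ is a finite union of smooth manifolds of codimension two, while the rank-deficient matrices contributing to $E$ (those with a repeated zero or a repeated positive value) form strata of codimension at least two as well. Consequently $E$ has Hausdorff dimension at most $n-2$, so $\mathcal{H}^{n-1}(E) = 0$.

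It remains to compute the divergence on $E^c$ and to verify the sign condition in Proposition \ref{prop:hausdorff}. On $E^c$ all singular values are distinct, so formula (9) of \cite{Candes:2013} applies with the spectral functions $f_k(d) = d\,1(k\le r)$, for which $f_k'(d_k) = 1(k\le r)$ and $f_k(d_k)/d_k = 1(k\le r)$. Substituting these values, the diagonal contributions together with the terms of the double sum coming from index pairs lying in the same block ($k,l\le r$ or $k,l>r$) collapse to the constant $pr$, while each mixed pair $k\le r < l$ contributes $\frac{d_k^2+d_l^2}{d_k^2-d_l^2}$, yielding \eqref{eq:nabrr}; this is a routine algebraic simplification. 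Finally, for $k \le r < l$ the ordering gives $d_k > d_l$, so every term $\frac{d_k^2+d_l^2}{d_k^2-d_l^2}$ is strictly positive and $\nabla\cdot\hat{\mu}(r) \ge pr \ge 0$ holds at every point of $E^c$, hence Lebesgue almost everywhere. Proposition \ref{prop:hausdorff} in its nonnegativity branch, which conveniently avoids checking the moment condition \eqref{eq:momcond} near the singular set where the divergence blows up, then gives \eqref{eq:stein}, and therefore $\mathrm{SURE}$ is an unbiased estimate of the risk.
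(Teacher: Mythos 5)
Your overall strategy coincides with the paper's: verify the hypotheses of Proposition~\ref{prop:hausdorff} on the complement of the coincidence set $E$, use the nonnegativity branch (your algebraic reduction of formula (9) of \cite{Candes:2013} to \eqref{eq:nabrr}, and the resulting bound $\nabla\cdot\hat{\mu}(r)\geq pr$, both check out), and conclude via $\mathcal{H}^{pq-1}(E)=0$. Where you genuinely diverge is in the key step $\mathcal{H}^{pq-1}(E)=0$. The paper bounds the Hausdorff dimension of $E$ by exhibiting it inside the image of a Lipschitz map from a product of Stiefel manifolds of dimension $pq-2$, i.e.\ by directly counting free parameters in a singular value decomposition with a forced coincidence. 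You instead pull back the Neumann--Wigner codimension-two locus in $\mathrm{Sym}(q)$ through the map $\mathbf{Y}\mapsto\mathbf{Y}^T\mathbf{Y}$, using that this map is a submersion at full column rank. The paper explicitly mentions this route as ``closely related'' and sidesteps it because controlling the codimension of the subvariety is delicate; your version is cleaner where it applies, but it only applies on part of $E$.

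The gap is precisely the part where it does not apply: the rank-deficient stratum. At a matrix with $\operatorname{rank}(\mathbf{Y})<q$ the differential $H\mapsto H^T\mathbf{Y}+\mathbf{Y}^T\mathbf{Y} H$ is \emph{not} surjective onto $\mathrm{Sym}(q)$ (its image misses the $(q,q)$ entry in a basis adapted to $\ker\mathbf{Y}$), so the submersion argument gives you nothing there, and you dispose of this case with the bare assertion that those strata ``form strata of codimension at least two as well.'' Matrices of rank at most $q-2$ are fine (codimension $2(p-q+2)\geq 4$), but the stratum of rank exactly $q-1$ with a repeated \emph{positive} singular value is not: when $p=q$ (the case used in the paper's simulation) the rank-$(q-1)$ manifold has codimension only $1$, so everything hinges on showing that the additional coincidence condition cuts the dimension down by at least one more --- which is exactly the kind of claim your whole argument is designed to prove, and which you cannot get from the submersion. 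You would need a separate argument here, e.g.\ a parameter count on the rank-$(q-1)$ stratum analogous to the paper's Stiefel-manifold construction (which handles all strata of $E$, including the degenerate ones, in a single step by allowing $d_{q-1}=0$ in the map $h$). As written, the proof of $\mathcal{H}^{pq-1}(E)=0$ is incomplete.
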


\begin{proof} The reduced rank estimator has finite second moment as
  argued above. We show that the conditions in Proposition
  \ref{prop:hausdorff} are fulfilled. To this end, note that it's
  clear from the explicit formula \eqref{eq:nabrr} that $\nabla \cdot
  \hat{\mu}(r) \geq 0$ whenever $\mathbf{Y}$ has no identical singular
  values. As argued below, this happens Lebesgue almost everywhere. Hence what remains is to
  show that the set of matrices with no identical singular values 
  and the estimator $\hat{\mu}(r)$ defined on this set fulfill the properties of Proposition
  \ref{prop:hausdorff}.  

Letting
$$E = \{ \mathbf{Y} \in M(p, q) \mid \mathbf{Y} \text{ has two identical singular values}
\}$$
it was shown by \cite{Mukherjee:2015} that $E$ is a proper subvariety, and
it is, in particular, a closed set.

For $\mathbf{Y} \in E^c$ with singular value decomposition
$$\mathbf{Y} = \sum_{k=1}^q d_k u_k v_k^T$$
and $d_1 > d_2 > \ldots > d_q \geq 0$, the reduced rank estimator is
given by
$$\hat{\mu}(r) = \sum_{k=1}^r d_k u_k v_k^T,$$
for $r \in \{1, \ldots, q - 1\}$. An application of e.g. Theorem 5.3 in \cite{Serre:2010} to $\mathbf{Y}^T
\mathbf{Y}$ shows that the singular values $d_1, \ldots, d_{q-1}$ as well as $u_1,
\ldots, u_{q-1}$ and $v_1, \ldots, v_{q-1}$ are $C^{\infty}$ on $E^c$, thus
$\hat{\mu}(r)$ is $C^{\infty}$ on $E^c$ for $r < q$. The algebra by
\cite{Mukherjee:2015} or \cite{Candes:2013} gives the actual
partial derivatives and thus the divergence of $\hat{\mu}(r)$. It does
require a bit of algebra, though, to realize that the divergence formula (9) in
\cite{Candes:2013} is identical to the formula in Theorem 3 in
\cite{Mukherjee:2015}, which is stated above as \eqref{eq:nabrr}. See
also \eqref{eq:nab} below.  

We complete the proof using Proposition \ref{prop:hausdorff} by
establishing that $\mathcal{H}^{pq - 1}(E) = 0$. To this end we show that the
codimension of $E$ is at least 2. 

Let $V_r(m)$ denote the Stiefel manifold of $r$-tuples of
$k$-dimensional orthonormal vectors in $\mathbb{R}^m$, and introduce
$h : V_{q-2}(q) \times V_q(p) \times [0,\infty)^{q-1} \to M(p, q)$ by 
$$h((v_k)_k, (u_k)_k, d) = \sum_{k=1}^{q-2} d_k
u_k v_k^T + d_{q-1}(u_{q-1}
v_{q-1}^T + u_q v_q^T),$$
where the unit vectors $v_{q-1}$ and $v_{q}$ are chosen (by $h$ and depending on
$(v_k)_k$) orthogonal to $v_1,
\ldots, v_{q-2}$. The set $E$ is
in the image of $h$ because when two singular values are identical the
singular value decomposition is not unique, and it is possible to
choose an orthogonal transformation such that
$v_{q-1}^T$ and $v_{q}^T$ in the singular value
decomposition are those chosen by the map $h$. The Stiefel manifold $V_r(m)$ has dimension
$$\mathrm{dim}(V_r(m)) = rm - \frac{1}{2}r(r + 1)$$
as a differentiable manifold. It follows that $E$ locally is contained
in the image under a Lipschitz map of a set of dimension 
$$q(q-2) - \frac{1}{2}(q-2)(q-1) + pq - \frac{1}{2}q(q+1) + q - 1 = pq
- 2.$$
It follows from Theorem 2.4.1 in \cite{Evans:1992} that $E$ has
Hausdorff dimension at most $pq - 2$, whence $\mathcal{H}^{pq-1}(E) =
0$. By Proposition \ref{prop:hausdorff} it follows that \eqref{eq:stein} holds, and the
divergence \eqref{eq:nabrr} is an unbiased
estimate of the degrees of freedom. 
\end{proof}

The fact that $E$ is a proper subvariety implies that $E$
  has codimension at least 1 and $E$ has Lebesgue measure zero. As
  argued above, this is not sufficient for Stein's lemma to hold. The
  set $E$ needs to be even smaller as expressed by the condition $\mathcal{H}^{pq-1}(E) =
0$ using the Hausdorff measure. Establishing that $E$ has codimension 2
is enough for this condition to be fulfilled. 

The argument above is closely related to the long
established fact that the set of matrices with repeated eigenvalues in
the set of real symmetric matrices has codimension 2. A result
credited to Neumann and Wigner, see p. 36 in \cite{Lax:2007}. It does, however,
appear somewhat complicated to determine the codimension of $E$ as a
subvariety, see \cite{Dana:2006} for the case of symmetric matrices,
and we proceeded in the argument above by effectively counting the free
parameters in the singular value decomposition instead.

Essentially the same argument as above can be used for spectral
function estimators provided that the spectral functions are suitably
well behaved. 

\begin{thm} \label{thm:specfun}
Consider a spectral function estimator
$$\hat{\mu} = \sum_{k=1}^q f_k(d_k) u_k v_k^T$$
with finite second moment and with spectral functions fulfilling that  $f_1, \ldots,
f_{q-1}$ are continuously differentiable on $(0, \infty)$, $f_q$ is
continuously differentiable on $[0, \infty)$, $f_k \geq f_l$ for $k
< l$ and $f_k' \geq 0$. Then $\mathrm{SURE}$
is an unbiased estimate of the risk, and 
\begin{equation} 
\label{eq:nab}
\nabla \cdot \hat{\mu} = (p - q) \sum_{k=1}^q \frac{f_k(d_k)}{d_k} +
\sum_{k=1}^q f_k'(d_k) + 2 \sum_{k,l=1 \atop k \neq l}^q
\frac{d_kf_k(d_k)}{d_k^2 - d_l^2}
\end{equation}
whenever $\mathbf{Y}$ has $q$ distinct singular values.
\end{thm}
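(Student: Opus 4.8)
The plan is to follow the template of the proof of Theorem~\ref{thm:rr} and reduce the statement to the hypotheses of Proposition~\ref{prop:hausdorff}, working again with the closed set
$$E = \{ \mathbf{Y} \in M(p,q) \mid \mathbf{Y} \text{ has two identical singular values} \},$$
for which $\mathcal{H}^{pq-1}(E) = 0$ was already established. Finite second moment holds by assumption, so what remains is to show that $\hat{\mu}$ is continuously differentiable on $E^c$ and that the alternative sufficient condition $\nabla \cdot \hat{\mu} \geq 0$ holds Lebesgue almost everywhere. As in Theorem~\ref{thm:rr} I would use the positivity branch of Proposition~\ref{prop:hausdorff} rather than the moment condition \eqref{eq:momcond}, since the cross terms in \eqref{eq:nab} blow up near $E$ and are not obviously integrable there.

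For the regularity, the first $q-1$ singular values and their singular vectors are $C^\infty$ on $E^c$ by the same application of Theorem 5.3 in \cite{Serre:2010} used in Theorem~\ref{thm:rr}, so $\sum_{k=1}^{q-1} f_k(d_k) u_k v_k^T$ is $C^1$ on $E^c$ once $f_1, \ldots, f_{q-1}$ are $C^1$ on $(0,\infty)$, which suffices because $d_1 > \cdots > d_{q-1} > 0$ there. The genuinely new point, and the step I expect to be the main obstacle, is the $q$th term $f_q(d_q) u_q v_q^T$ at matrices in $E^c$ with $d_q = 0$: there the pair $(u_q, v_q)$ ceases to be determined (only up to sign when $p = q$, and over an entire sphere when $p > q$), so $u_q v_q^T$ need not even be continuous. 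The way around this is to write $d_q\, u_q v_q^T = \mathbf{Y} v_q v_q^T$, where $v_q v_q^T$ is the spectral projection of $\mathbf{Y}^T \mathbf{Y}$ onto the simple eigenvalue $d_q^2$ and is therefore $C^\infty$ on $E^c$; consequently $f_q(d_q) u_q v_q^T$ extends to a $C^1$ map across $\{d_q = 0\}$ provided $f_q(0) = 0$, as holds for the reduced rank, soft, and hard spectral functions and is in fact forced by the shrinkage bound $f_q(d) \leq C_q d$. When $p > q$ one may instead simply enlarge $E$ by the rank-deficient locus $\{d_q = 0\}$, a determinantal variety of codimension $p - q + 1 \geq 2$ and hence still $\mathcal{H}^{pq-1}$-null; when $p = q$ the locus $\{d_q = 0\} = \{\det \mathbf{Y} = 0\}$ has codimension one and cannot be discarded, so here one really does need the $C^1$ extension above (note, however, that the potentially singular first sum in \eqref{eq:nab} carries the prefactor $p - q = 0$ in this case).

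Granting differentiability on $E^c$, I would take the divergence formula \eqref{eq:nab} as given by the computation of \cite{Candes:2013} and \cite{Mukherjee:2015} cited in Theorem~\ref{thm:rr} and verify its nonnegativity directly from the hypotheses. The first two sums are nonnegative because $p \geq q$, $f_k \geq 0$ and $f_k' \geq 0$, while the cross term is handled by pairing the indices $(k,l)$ and $(l,k)$:
$$\frac{d_k f_k(d_k)}{d_k^2 - d_l^2} + \frac{d_l f_l(d_l)}{d_l^2 - d_k^2} = \frac{d_k f_k(d_k) - d_l f_l(d_l)}{d_k^2 - d_l^2}.$$
For $k < l$ one has $d_k > d_l \geq 0$, so the denominator is positive, and since $f_l$ is nondecreasing and $f_k \geq f_l$ one gets $f_k(d_k) \geq f_l(d_k) \geq f_l(d_l)$, whence $d_k f_k(d_k) \geq d_l f_l(d_l) \geq 0$ and each paired term is nonnegative. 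Thus $\nabla \cdot \hat{\mu} \geq 0$ wherever $\mathbf{Y}$ has distinct singular values, that is, Lebesgue almost everywhere, and Proposition~\ref{prop:hausdorff} yields \eqref{eq:stein} and the unbiasedness of $\mathrm{SURE}$.
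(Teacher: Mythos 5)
Your proposal follows essentially the same route as the paper's proof: the same exceptional set $E$ with $\mathcal{H}^{pq-1}(E)=0$ carried over from Theorem~\ref{thm:rr}, the divergence formula \eqref{eq:nab} quoted from \cite{Candes:2013}, and nonnegativity of the cross term established by the identical pairing of $(k,l)$ with $(l,k)$ together with the chain $f_k(d_k)\ge f_l(d_k)\ge f_l(d_l)$ (the paper writes $f_k(d_k)\ge f_k(d_l)\ge f_l(d_l)$, which is the same idea in the other order). Your extra care at the rank-deficient locus $\{d_q=0\}$ is a genuine refinement that the paper's proof silently skips over; just note that the shrinkage bound $f_q(d)\le C_q d$ you invoke to force $f_q(0)=0$ is not among the theorem's stated hypotheses (only finite second moment is assumed), so $f_q(0)=0$ is really an implicit additional assumption needed when $p=q$ rather than a consequence of the stated ones.
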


\begin{proof} As written above, the proof is along the same lines as
  the proof of Theorem \ref{thm:rr}. The set $E$ has
  $(pq-1)$-dimensional Hausdorff measure zero, and on $E^c$ the
  estimator $\hat{\mu}$ is continuously differentiable -- under the
  differentiability assumptions on the spectral functions -- with divergence
  given by \eqref{eq:nab}. This divergence formula was shown by \cite{Candes:2013}
  and is given as (9) in their paper. 

  To use Proposition \ref{prop:hausdorff} we verify that $\nabla \cdot
  \hat{\mu} \geq 0$ on $E^c$. Clearly, as $f_k$ is positive and $f_k'$
  is also assumed positive, the first two terms in \eqref{eq:nab} are
  positive. For the third term we rearrange the double sum as 
$$\sum_{k,l=1 \atop k \neq l}^q
\frac{d_kf_k(d_k)}{d_k^2 - d_l^2} = \sum_{k=1}^q \sum_{l = k + 1}^q
\frac{d_kf_k(d_k) - d_lf_l(d_l)}{d_k^2 - d_l^2}.$$
Using that $d_k > d_l$ for $k < l$, and that this implies that 
$$f_k(d_k) \geq f_k(d_l) \geq f_l(d_l),$$
we have that for each term in this double sum
$$\frac{d_kf_k(d_k) - d_l f_l(d_l) }{d_k^2 - d_l^2} \geq
\frac{(d_k - d_l) f_l(d_l) }{d_k^2 - d_l^2} \geq 0.
$$
This completes the proof. 
\end{proof}

As stated in the proof above, formula \eqref{eq:nab} is identical to (9) from
\cite{Candes:2013}. An equivalent formula is given in Theorem 4 in
\cite{Mukherjee:2015}. Clearly, $f_k(d) = d1(k \leq r)$ is smooth,
whereas $f_k(d) = d 1(d \geq \lambda)$ is not, and Theorem
\ref{thm:specfun} doesn't apply to singular value hard thresholding. 

It's possible that the monotonicity requirements on the spectral
functions above can be relaxed,  and that \eqref{eq:momcond} can be
verified instead of the positivity on the divergence. But this won't be
pursued in this note. 

\section{Simulation}

This section presents the results from a simulation that illustrates the
unbiasedness of SURE for reduced rank estimation and singular value
soft thresholding, whereas \eqref{eq:nab} is shown to be a biased
estimate of degrees of freedom for singular value hard thresholding. 

For the simulation we made $B = 5000$ simulations with $p = q = 21$ and $Y_{ij}^b \sim \mathcal{N}(0,
1)$ for $b = 1, \ldots, B$. With 
$$\mathbf{Y}^b =  \sum_{k=1}^q d_k^b u_k^b (v_k^b)^T, \qquad d_1^b
\geq d_2^b \geq \ldots \geq d_q^b \geq 0$$ 
the singular value decomposition of the $b$th matrix $\mathbf{Y}^b =
(Y_{ij}^b)_{i,j}$ we computed the three estimators 
\begin{align*}
\hat{\mu}_b(r) & = \sum_{k=1}^r d_k^b u_k^b (v_k^b)^T 
                 \tag*{(\textrm{reduced rank})} \\
\overline{\mu}_b(r) & = \sum_{k=1}^q d_k^b1(d_k^b \geq \lambda) u_k^b (v_k^b)^T
                 \tag*{(\textrm{hard thresholding})} \\
\tilde{\mu}_b(\lambda) & = \sum_{k=1}^q (d_k^b - \lambda)_+ u_k^b (v_k^b)^T. 
                 \tag*{(\textrm{soft thresholding})}
\end{align*}

By the definition of the degrees of freedom, \eqref{eq:df}, the
estimate 
$$\hat{\mathrm{df}}_0(r) = \frac{1}{B} \sum_{b=1}^{B}
\mathrm{tr}(\hat{\mu}_b(r)^T (\mathbf{Y}^b - \mu))$$
is an unbiased estimate of $\mathrm{df}$ for the reduced rank
estimator, and similar estimates of $\mathrm{df}$ were computed for the other two
estimators. Note that such estimates based on the covariance
definition are of no use in real applications as they rely on knowledge of the
true mean. In this simulation the true mean was $\mu = 0$.

\begin{figure}
\makebox[\textwidth][c]{\includegraphics[width=16cm]{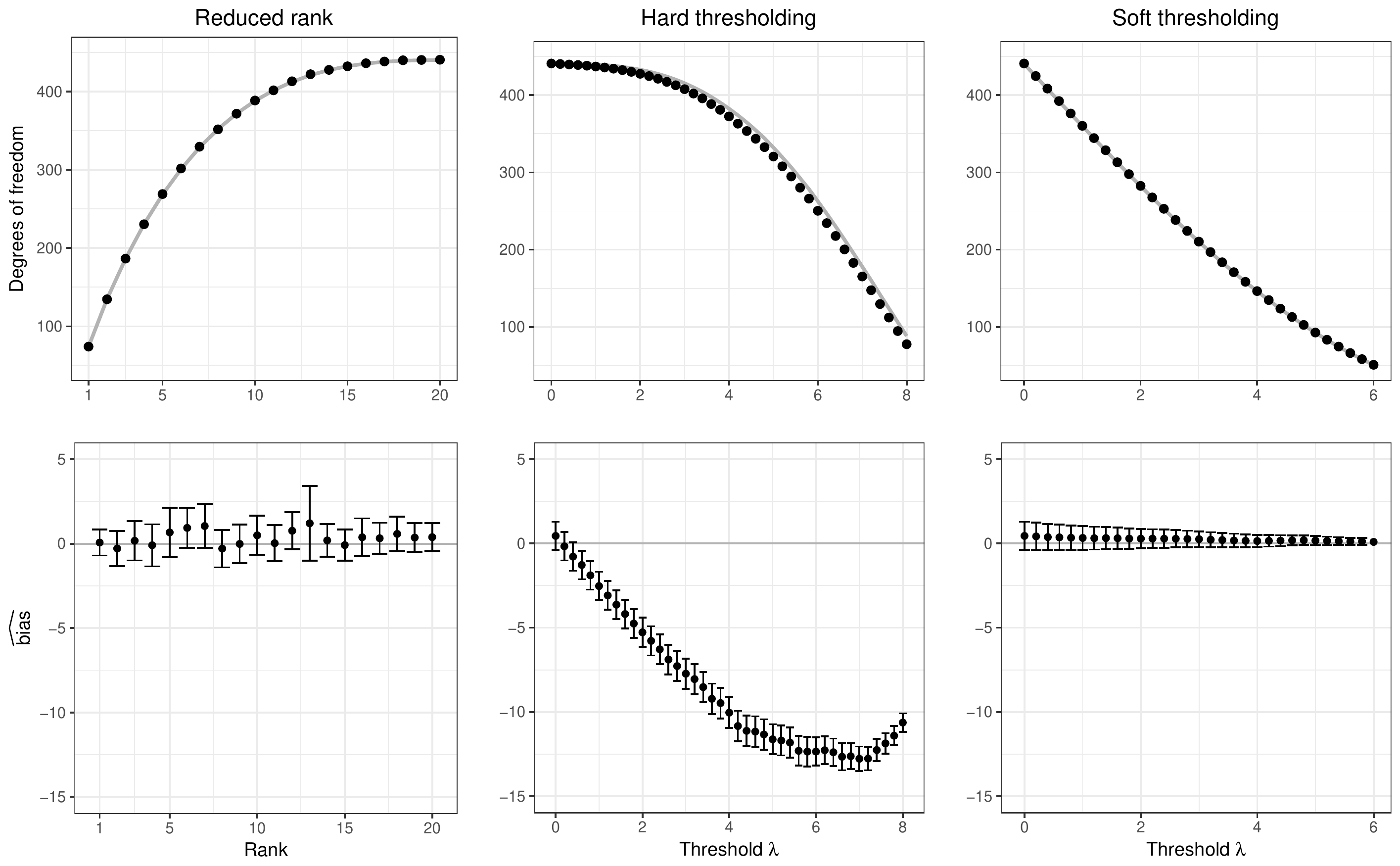}}
\caption{The top row shows average degrees of freedom as
  estimated by the divergence formulas (points) and by the covariance definition
  \eqref{eq:df} (gray line). The bottom row shows the bias of the
  divergence estimate for the three different estimators. For the
  bias, the 95\% confidence intervals shown quantify the simulation
  uncertainty. \label{fig:sim_results}} 
\end{figure}

Estimates based on the divergence were computed for each of the three
estimators as follows
\begin{align*}
\hat{\mathrm{df}}(r) & = pr + \frac{1}{B} \sum_{b=1}^B  \sum_{k=1}^r \sum_{l= r + 1}^q
\frac{(d_k^b)^2 + (d_l^b)^2}{(d_k^b)^2 - (d_l^b)^2} \\
\overline{\mathrm{df}}(\lambda) & = \frac{1}{B} \sum_{b=1}^B \left((p
                                  - q + 1) \sum_{k=1}^q 1(d_k^b \geq \lambda)
+ 2 \sum_{k,l=1 \atop k \neq l}^q
\frac{(d_k^b)^21(d_k^b \geq \lambda)}{(d_k^b)^2 - (d_l^b)^2} \right)\\
\tilde{\mathrm{df}}(\lambda) & = \frac{1}{B} \sum_{b=1}^B \left((p -
                               q) \sum_{k=1}^q \left(1 - \frac{\lambda}{d_k^b}\right)_+ +
\sum_{k=1}^q 1(d_k^b \geq \lambda) + 2 \sum_{k,l=1 \atop k \neq l}^q
\frac{d_k^b(d_k^b - \lambda)_+}{(d_k^b)^2 - (d_l^b)^2} \right).
\end{align*}
Note that with 
$$r(\lambda) =  \sum_{k=1}^q 1(d_k^b \geq \lambda)$$
it holds that $\hat{\mathrm{df}}(r(\lambda)) = \overline{\mathrm{df}}(\lambda)$.

The bias estimate is defined as
$$\widehat{\mathrm{bias}}(r) = \hat{\mathrm{df}}(r) - \hat{\mathrm{df}}_0(r)$$
for the reduced rank estimator and likewise for the other two
estimators. For reduced rank and soft thresholding, the bias is zero
by the theoretical results. 

Figure \ref{fig:sim_results} shows the results of the simulation. It
shows clearly that for singular value hard thresholding the bias is
non-zero, though it is relatively small. The results for reduced rank
and soft thresholding are completely in concordance with the theoretical results
that the bias is 0, and with the simulation results presented by \cite{Candes:2013}
and \cite{Mukherjee:2015}.

\section{Final comments}

There is no claim of originality in terms of the estimators considered
or the formulas presented for estimating degrees of
freedom and computing SURE. These can be found in the papers by \cite{Candes:2013}
and by \cite{Mukherjee:2015}. However, neither of these two papers -- nor
other papers that the author is aware of -- gives a complete proof of the
fact that Stein's lemma does apply to the reduced rank estimator. The
purpose of this note was to give this proof. 

We may note that the reduced rank estimator, $\hat{\mu}(r)$, and the
hard thresholding estimator, $\overline{\mu}(\lambda)$, provide the
exact same sequence of estimators for a given observation when viewed
as functions of $r$ and $\lambda$, respectively. Yet, for a fixed $r$
we can by SURE obtain an unbiased risk estimate for  $\hat{\mu}(r)$,
while for fixed $\lambda$, the SURE formula based on the divergence
estimate of degrees of freedom doesn't give an unbiased risk estimate
for $\overline{\mu}(r)$. While this is understandable when the mapping
between the two sequences of estimators is data dependent, it also
highlights that the parametrization matters when estimators are
assessed via their frequentistic risk. When tuning parameters are selected by
minimizing a risk estimate, this leads to the somewhat peculiar 
phenomenon that different parametrizations can lead to different
choices of tuning parameters. Or as is the case here, that one
parametrization provides an unbiased risk estimate, while another
provides a biased risk estimate, even though the risk estimates are
identical.

\bibliographystyle{agsm}

\end{document}